\DeclareMathOperator*{\End}{End}
\def\vF{\mathbb{F}}
\def\vN{\mathbb{N}}
\newtheorem{teo}{Theorem}
\newtheorem{lemma}[teo]{Lemma}
\newtheorem{cor}[teo]{Corollary}
\theoremstyle{definition}
\newtheorem{dfn}[teo]{Definition}
\newtheorem{example}[teo]{Example}
\newtheorem{rem}[teo]{Remark}
\author{Giacomo Micheli, Davide Schipani\\
Institut f\"ur Mathematik\\
Universit\"at Z\"urich}
\title{On Canonical Subfield Preserving Polynomials}
\begin{document}

\maketitle


\rhead[\fancyplain{}{\bfseries\leftmark}]{\fancyplain{}{\bfseries\thepage}}
\lhead[\fancyplain{}{\bfseries\thepage}]{\fancyplain{}{\bfseries INDICE}}


\lhead[\fancyplain{}{\bfseries\thepage}]{\fancyplain{}{\bfseries\rightmark}}

\begin{abstract}
Explicit monoid structure is provided for the class of canonical subfield preserving polynomials over finite fields. 
Some classical results and asymptotic estimates will follow as corollaries.
\end{abstract}

\pagenumbering{arabic}
\section{Introduction}
Let $q$ be a prime power and $m$ a natural number. In \cite{carlitz} the structure of the
group consisting of permutation polynomials \cite{lidl} of $\vF_{q^m}$ having coefficients in the base field $\vF_q$ was made explicit.
We start  observing that, if $f$ is a permutation of $\vF_{q^m}$ with coefficients in $\vF_{q}$ then
$$f(\vF_q)=\vF_q\quad\text{and}\quad
\forall\ d,s\mid m\ \ \ \ \ \ f(\vF_{q^d}\setminus\vF_{q^s})=\vF_{q^d}\setminus\vF_{q^s}.
$$
Indeed for any integer  $s\geq 1$, since $f$ has coefficients in $\vF_{q}$ and $\vF_{q^s}$ is a field, we have $f(\vF_{q^s})\subseteq  \vF_{q^s}$. Being $f$ also a bijection, this is also an equality. The property above follows then 
directly (see also \cite[Lemma 2]{carlitz}).

It is natural now to ask which are the polynomials $f$, having coefficients in $\vF_{q}$, such that
\begin{equation}\label{preserving}
f(\vF_q)\subseteq\vF_q\quad\text{and}\quad
\forall\ d,s\mid m\ \ \ \ \ f(\vF_{q^d}\setminus\vF_{q^s})\subseteq\vF_{q^d}\setminus\vF_{q^s}.
\end{equation}
Let us call $T_q^m$ the set of such 
polynomials. We remark that this is a monoid under composition and its invertible elements $(T_q^m)^*$  consist of the group of permutation polynomials with coefficients in $\vF_{q}$ mentioned above.
In this paper we give the explicit semigroup structure of $T_q^m$, obtaining the main result of \cite{carlitz} (i.e. the group structure mentioned above) as a corollary.
The explicit semigroup structure will allow us to compute the probability that a 
polynomial chosen uniformly at random having coefficients in $\vF_{q}$ satisfies condition (\ref{preserving}). 
This will imply the following remarkable results:
\begin{itemize}
 \item{Given $p$ prime, for $q$ relatively large, the density of $T_q^p$ is approximately zero.}
 \item{Given $q$, for $p$ relatively large prime, the density of $T_q^p$ is approximately one.}
\item{For $q=p$ large prime 
 the density of  $T_p^p$ is  approximately $1/e$.}
\end{itemize} 
\section{Preliminary definitions}\label{PD}
\begin{dfn}
We say $f: \vF_{q^m}\rightarrow\vF_{q^m}$  to be \emph{subfield preserving} if 
\begin{equation}\label{property}
f(\vF_q)\subseteq\vF_q\quad\text{and}\quad\forall\ d,s\mid m\ \ \ \ \ \ f(\vF_{q^d}\setminus\vF_{q^s})\subseteq\vF_{q^d}\setminus\vF_{q^s}.
\end{equation}
Moreover, we will say $f$ to be $q$-\emph{canonical} if its polynomial representation has coefficients in $\vF_q$ (or simply \emph{canonical} when $q$ is understood).
\end{dfn}

\begin{rem}
One of the reason why we use the term \emph{canonical} to address the property of having coefficients in a subfield is that, under this property,
 the induced application $\tilde f$ of $f(x)$ is always well defined no matter what irreducible polynomial we choose for the representation of the finite field extension $\vF_{q^m}$.
\end{rem}
Denote by
$\mathcal{L}_{\vF_{q^m}}$
the set of all subfield preserving polynomials.
\begin{rem}
If we drop the condition on the coefficients, the semigroup structure becomes straightforward: 

$$\mathcal{L}_{\vF_{q^m}} \cong\  \stackrel[k|m]{}{\bigtimes}M_{[k\pi(k)]}.$$
with $\pi(k)$ being the number of monic irreducible polynomials of degree $k$ over $\vF_q$ and $M_{[n]}$ being the set of all maps from $\{1,\ldots,n\}$ to itself.
\end{rem}
\begin{rem}
Clearly not all subfield preserving polynomials are canonical, which can also be checked by a cardinality count with the results later in the paper.

\end{rem}

In the rest of the paper we will need the following lemma, whose proof can be easily adapted from \cite{carlitz} and \cite{chen}.
\begin{lemma}
Let $f: \vF_{q^m}\rightarrow\vF_{q^m}$ be a map. Then $f\in\vF_{q}[x]$ if and only if $f\circ \varphi_q=\varphi_q\circ f$ where $\varphi_q(x)=x^q$.
\end{lemma}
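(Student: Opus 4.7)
The plan is to prove both implications using the standard fact that every map $\mathbb{F}_{q^m} \to \mathbb{F}_{q^m}$ has a unique polynomial representative of degree less than $q^m$, since any polynomial can be reduced modulo $x^{q^m} - x$.

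For the forward direction ($f \in \mathbb{F}_q[x] \Longrightarrow f\circ\varphi_q = \varphi_q \circ f$), I would simply observe that if $f(x) = \sum a_i x^i$ with $a_i \in \mathbb{F}_q$, then for any $\alpha \in \mathbb{F}_{q^m}$ the Frobenius relations $a_i^q = a_i$ and additivity of the $q$-th power give
$$f(\alpha)^q = \Bigl(\sum a_i \alpha^i\Bigr)^q = \sum a_i^q \alpha^{iq} = \sum a_i (\alpha^q)^i = f(\alpha^q).$$
This is a one-line check.

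The substantive direction is the converse. Take the canonical representative $f(x) = \sum_{i=0}^{q^m-1} a_i x^i$ with $a_i \in \mathbb{F}_{q^m}$. The hypothesis $f(\alpha^q) = f(\alpha)^q$ for all $\alpha \in \mathbb{F}_{q^m}$ expands to
$$\sum_{i=0}^{q^m-1} a_i \alpha^{iq} = \sum_{i=0}^{q^m-1} a_i^q \alpha^{iq},$$
that is, $\sum (a_i - a_i^q)\alpha^{iq} = 0$ for every $\alpha$. Now the key trick is that $\varphi_q$ is a bijection on $\mathbb{F}_{q^m}$, so setting $\beta = \alpha^q$ the variable $\beta$ also ranges over all of $\mathbb{F}_{q^m}$, and we obtain a polynomial $g(\beta) = \sum_{i=0}^{q^m-1}(a_i - a_i^q)\beta^i$ of degree less than $q^m$ which vanishes identically on $\mathbb{F}_{q^m}$.

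Finally, I would invoke uniqueness of the representative: a polynomial of degree strictly less than $q^m$ that vanishes on all of $\mathbb{F}_{q^m}$ must be the zero polynomial, so $a_i = a_i^q$ for each $i$, which by the characterization of the fixed field of $\varphi_q$ forces $a_i \in \mathbb{F}_q$. No step is genuinely hard — the only subtle point, and the one I would double check carefully, is the reduction to a polynomial identity on $\mathbb{F}_{q^m}$ via the substitution $\beta = \alpha^q$, since without using bijectivity of Frobenius one would be stuck with a sum of $\alpha^{iq}$ whose exponents exceed $q^m - 1$ and would require an additional reduction step modulo $x^{q^m} - x$.
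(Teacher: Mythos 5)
Your proof is correct. The paper gives no proof of this lemma, deferring to Carlitz--Hayes and Chen, and your argument --- the one-line Frobenius computation for the forward direction, and for the converse the comparison $\sum(a_i-a_i^q)\alpha^{iq}=0$, turned via the substitution $\beta=\alpha^q$ (legitimate since $\varphi_q$ is a bijection of $\vF_{q^m}$) into a polynomial of degree less than $q^m$ vanishing on all of $\vF_{q^m}$, hence zero, hence $a_i^q=a_i$ and $a_i\in\vF_q$ --- is precisely the standard argument those references use, with the one genuinely delicate step (reducing exponents $iq$ that exceed $q^m-1$) handled correctly.
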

Indeed the set of functions we are looking at consists of $T_q^m=\mathcal{L}_{\vF_{q^m}}\cap\ \mathcal{C}_{\varphi_q}$ where $ \mathcal{C}_{\varphi_q}\mathrel{\mathop:}=
\{ f: \vF_{q^m}\to \vF_{q^m} \mid   f\circ \varphi_q=\varphi_q\circ f  \}
$.

\section{Combinatorial underpinning}\label{CU}

Let $S$ be a finite set and $\psi: S \to S$ a bijection.
For any $T\subseteq S$, let
$$
\mathcal{K}_{\psi}(T)\mathrel{\mathop:}=
\{ f: T\to T \mid   \forall x\in T\ \  f\circ \psi(x)=\psi\circ f(x) \}.
$$
For any partition $\mathcal{P}$ of $S$ into sets $P_k$, let 
$$
M_{S}(\mathcal{P})\mathrel{\mathop:}=
\{ f: S\to S \mid   \forall\ k\ \  f(P_k)\subseteq P_k  \}.
$$
When $\mathcal P=\{S\}$ is the trivial partition, we will denote $M_{S}(\{S\})=M_S$ namely the monoid of applications
from $S$ to itself.

For any bijection $\phi: S \to S$, define $\phi_k$ for any $k$ as the composition of the cycles of $\phi$ of length $k$, and set $\phi_k=(\emptyset)$ if $\phi$ has no cycles of length $k$. Let $W$ denote the set $\{1,\ldots,|S|\}$, then $\phi=\stackrel[k\in W,\phi_k\neq(\emptyset)]{}{\prod} \phi_k$. If $\text{supp}(\phi_k)$ denotes the set of elements moved by $\phi_k$, then $\phi$ induces a partition $\mathcal{P}_{\phi}$ on $S=\stackrel[k\in W]{}{\bigcup} 
S_k$, with $S_k=\text{supp}(\phi_k)$, for $k\geq 2$, and $S_1$ being the set of fixed points of $\phi$.


\begin{lemma}\label{lemma1}
$$M_{S}(\mathcal{P}_{\phi})\cap \mathcal{K}_{\phi}(S) \cong \stackrel[k\in W,\phi_k\neq(\emptyset)]{}{\bigtimes}\mathcal{K}_{\phi_k}(S_k)$$
\end{lemma}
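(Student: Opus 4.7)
The plan is to exhibit the isomorphism as a restriction-and-glue operation. First I would define the candidate map $\Phi: M_S(\mathcal{P}_\phi) \cap \mathcal{K}_\phi(S) \to \bigtimes_{k} \mathcal{K}_{\phi_k}(S_k)$ (with $k$ ranging over those indices for which $\phi_k\neq(\emptyset)$) by sending each $f$ to the tuple of restrictions $(f|_{S_k})_k$, and then construct its inverse $\Psi$ by gluing: given a tuple $(f_k)_k$ in the product, set $\Psi((f_k))(x) := f_k(x)$ for the unique index $k$ with $x \in S_k$. This is well-defined because the $S_k$ partition $S$.

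The key observation on which the whole argument rests is that cycles of $\phi$ of different lengths have pairwise disjoint supports, so $\phi|_{S_k}$ coincides with $\phi_k$ for $k\geq 2$, and $\phi|_{S_1}=\mathrm{id}_{S_1}$ (hence $\mathcal{K}_{\phi_1}(S_1)=M_{S_1}$). Once this is pinned down, the checks are routine. For $f$ in the intersection, the condition $f(S_k)\subseteq S_k$ already makes $f|_{S_k}$ a self-map of $S_k$, and restricting the global identity $f\circ\phi=\phi\circ f$ to $S_k$ yields $f|_{S_k}\circ\phi_k=\phi_k\circ f|_{S_k}$, so $\Phi(f)$ lands in the product. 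Conversely, for a tuple $(f_k)$, the function $\Psi((f_k))$ stabilises each $S_k$ by construction; and commutation with $\phi$ is verified pointwise, since for $x\in S_k$ we have $\phi(x)=\phi_k(x)\in S_k$, so the local commutation of $f_k$ with $\phi_k$ on $S_k$ supplies the identity $f(\phi(x))=\phi(f(x))$.

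That $\Phi$ and $\Psi$ are mutually inverse is immediate from the definitions. To promote $\Phi$ to a semigroup (monoid) homomorphism, one invokes the trivial identity $(f\circ g)|_{S_k}=f|_{S_k}\circ g|_{S_k}$, valid whenever both $f$ and $g$ preserve $S_k$, which is exactly componentwise composition on the product side. I do not foresee any serious obstacle here: the substance of the lemma is bookkeeping around the partition induced by the cycle structure of $\phi$, with the only minor subtlety being the convention $\phi_k=(\emptyset)$, which merely excludes certain indices from the product without affecting any step.
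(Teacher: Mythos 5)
Your proof is correct and follows essentially the same route as the paper: the paper builds the inverse direction by extending each $f_k\in\mathcal{K}_{\phi_k}(S_k)$ by the identity and composing, which is exactly your gluing map $\Psi$, and its surjectivity argument is your restriction map $\Phi$. If anything, you are slightly more explicit than the paper about the points that make it work ($\phi|_{S_k}=\phi_k$, and the homomorphism property via $(f\circ g)|_{S_k}=f|_{S_k}\circ g|_{S_k}$).
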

\begin{proof}
Clearly any $f\in \mathcal{K}_{\phi_k}(S_k)$ can be extended to $S$ as the identity and then the extension $\bar f$ belongs to $\mathcal{K}_{\phi}(S)\cap M_{S}(\mathcal{P}_{\phi})$. Indeed we have a natural injection
$$
\stackrel[k\in W,\phi_k\neq(\emptyset)]{}{\bigtimes}\mathcal{K}_{\phi_k}(S_k)\hookrightarrow M_{S}(\mathcal{P}_{\phi})\cap \mathcal{K}_{\phi}(S).
$$
This is also a surjection: in fact let $f\in M_{S}(\mathcal{P}_{\phi})\cap \mathcal{K}_{\phi}(S)$ and define
$$
f_k(x)\mathrel{\mathop:}=\begin{cases} f(x) &\mbox{if } x\in S_k, \\
x & \mbox{otherwise}.    \end{cases}
$$
Since $M_{S}(\mathcal{P}_{\phi})\cap \mathcal{K}_{\phi}(S)\subseteq M_{S}(\mathcal{P}_{\phi})$, then $f_k(S_k)\subseteq S_k$ which implies 
$$f_k\big|_{S_k} \in \mathcal{K}_{\phi_k}(S_k).$$
As the $S_k$ form a partition, the composition of all the $f_k$ coincides with $f$.
\end{proof}
Now, for $n,k\in \vN$ let $U_n^k$ be a set with $kn$ elements and $\psi$ a bijection of $U_n^k$ having $n$ cycles of length $k$. Let us put indeces on the elements of the set in the following way: let $a_{ij}$ be the $j$-th element of the $i$-th cycle, with $i\in\{1,\ldots,n\}$ and $j\in\{1,\ldots,k\}$.

Let $[h]$ denote $\{1,\ldots,h\}$ for a natural number $h$. We say $\lambda: [h]\to [h]$ to be a cyclic shift of $[h]$ if  $\lambda(j+\ell)=\lambda(j)+\ell$ modulo $h$ for any $j,\ell\in[h]$.

Let $\gamma_1,\ldots,\gamma_n$ be cyclic shifts of $[k]$ and $\sigma: [n]\to [n]$ a map. We construct then $f_{\sigma}^{\gamma}: U_n^k\to U_n^k$ as follows:
$$
f_{\sigma}^{\gamma}(a_{ij})\mathrel{\mathop:}=a_{\sigma(i)\gamma_i(j)}.
$$
\begin{teo}\label{teo2}
$
g\in \mathcal{K}_{\psi}(U_n^k) \iff \exists\ \gamma\mathrel{\mathop:}=(\gamma_1,\ldots,\gamma_n)$, $\gamma_i$ cyclic shifts of $[k]$, and $\exists\ \sigma: [n]\to [n]$ map such that $g=f_{\sigma}^{\gamma}$.
\end{teo}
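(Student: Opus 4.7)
The plan is to check the easy direction by direct computation, then extract $\sigma$ and $\gamma$ from $g$ in the forward direction by using commutation with $\psi$ to propagate values along cycles.

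For the $(\Leftarrow)$ direction I would fix $g=f_\sigma^\gamma$ and compute both sides of the commutation relation on the generators. Writing indices modulo $k$, we have $\psi(a_{ij})=a_{i,j+1}$, so
\[
g(\psi(a_{ij}))=g(a_{i,j+1})=a_{\sigma(i),\gamma_i(j+1)}=a_{\sigma(i),\gamma_i(j)+1}=\psi(a_{\sigma(i),\gamma_i(j)})=\psi(g(a_{ij})),
\]
where the middle equality is exactly the cyclic shift property of $\gamma_i$. This shows $f_\sigma^\gamma\in\mathcal{K}_\psi(U_n^k)$.

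For the $(\Rightarrow)$ direction I would start with $g\in\mathcal{K}_\psi(U_n^k)$ and observe that commutation with $\psi$ means $g$ is determined by its values on one representative per $\psi$-cycle, together with a consistency condition. Pick the representative $a_{i1}$ from the $i$-th cycle and write $g(a_{i1})=a_{\sigma(i),t_i}$ for some uniquely defined $\sigma(i)\in[n]$ and $t_i\in[k]$. Applying the commutation relation $g\circ\psi^{j-1}=\psi^{j-1}\circ g$ gives
\[
g(a_{ij})=g(\psi^{j-1}(a_{i1}))=\psi^{j-1}(a_{\sigma(i),t_i})=a_{\sigma(i),t_i+j-1},
\]
with the second coordinate read mod $k$. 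This is the key computation: it shows that the first coordinate of $g(a_{ij})$ depends only on $i$ (so $\sigma$ is well defined as a map $[n]\to[n]$), and that the second coordinate has the form of a shift determined only by $i$.

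I would then define $\gamma_i(j):=t_i+j-1\bmod k$ and verify that it is a cyclic shift: $\gamma_i(j+\ell)=t_i+j+\ell-1=\gamma_i(j)+\ell\pmod k$, which is precisely the definition given in the paper. With these choices, $g(a_{ij})=a_{\sigma(i),\gamma_i(j)}=f_\sigma^\gamma(a_{ij})$ on every generator, hence $g=f_\sigma^\gamma$. I do not expect any real obstacle here: once the commutation relation is used to force the first coordinate of $g(a_{ij})$ to be independent of $j$, the structure of $\gamma_i$ as a cyclic shift is automatic from $\psi$-equivariance, and the whole argument reduces to bookkeeping of indices modulo $k$.
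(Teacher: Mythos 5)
Your proposal is correct and follows essentially the same route as the paper: the $(\Leftarrow)$ direction is the same direct computation on indices, and in the $(\Rightarrow)$ direction you extract $\sigma(i)$ and the shift from $g(a_{i1})$ via $g\circ\psi^{j-1}=\psi^{j-1}\circ g$, exactly as the paper does (the paper defines $\gamma_i(j)$ as the second index of $g(a_{ij})$ and then verifies the shift property, while you write it explicitly as $t_i+j-1$, but these are the same map). No gaps.
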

\begin{proof}
Suppose first $g\in \mathcal{K}_{\psi}(U_n^k)$. Then 
$$g(a_{ij})=g(\psi^{j-1}(a_{i1}))=\psi^{j-1}(g(a_{i1})).$$
Define $\sigma(i)\mathrel{\mathop:}=[g(a_{i1})]_1$ and $\gamma_i(j)\mathrel{\mathop:}=[g(a_{ij})]_2$, where the subscripts $[x]_1$ and $[x]_2$ refer to the two indeces of $x\in U_n^k$ in the representation $a_{ij}$ above.

Observe that for all $i\in[n]$, $\gamma_i$ is a cyclic shift, indeed it holds modulo $k$:
$$
\gamma_i(j+\ell)=[g(a_{i\ j+\ell})]_2=[g(\psi^\ell(a_{ij}))]_2=$$$$[\psi^\ell(g(a_{ij}))]_2=[g(a_{ij})]_2+\ell=\gamma_i(j)+\ell .
$$
Moreover remark that
$$
g(a_{ij})=g(\psi^{j-1}(a_{i1}))=\psi^{j-1}(g(a_{i1}))=\psi^{j-1}(a_{\sigma(i)\gamma_i(1)})=
$$
$$ a_{\sigma(i)\ \gamma_i(1)+j-1}=a_{\sigma(i)\gamma_i(j)}=f_{\sigma}^{\gamma}(a_{ij}).
$$
Let us prove now the other implication:
$$
\psi(f_{\sigma}^{\gamma}(a_{ij}))=\psi(a_{\sigma(i)\gamma_i(j)})=a_{\sigma(i)\ \gamma_i(j)+1}=
$$
$$a_{\sigma(i)\gamma_i(j+1)}=f_{\sigma}^{\gamma}(a_{i\ j+1})=f_{\sigma}^{\gamma}(\psi(a_{ij}))
$$
for all $i\in [n]$ and $j\in [k]$.
\end{proof}

\subsection{Semidirect product of monoids}
We now recall the definition of semidirect product of monoids
\begin{dfn}
Let $M,N$ be monoids and let $\Gamma: M \to \End(N)$ with $m\mapsto \Gamma_m$ be an antihomomorphism of monoids (i.e. $\Gamma_{m_1 m_2}=\Gamma_{m_2} \circ \Gamma_{m_1}$).
We define $M\ltimes_\Gamma N$ as the monoid having support $M\times N$ and operation $*$ defined by the formula
\[(m_1,n_1)*(m_2,n_2)=(m_1m_2, \Gamma_{m_2}(n_1) \,n_2)\]
\end{dfn}
\begin{rem}
It is straightforward to verify that the associative property holds.
\end{rem}
We will now prove an easy lemma that will be useful in Section \ref{SS}.
For any monoid $H$ let us denote by $H^*$ the group of invertible elements of $H$.
\begin{lemma}\label{prodmonprop}
Let $M\ltimes G$ be a semidirect product of monoids where $G$ is a group. Then
\[(M\ltimes G)^*= M^*\ltimes G\]
\end{lemma}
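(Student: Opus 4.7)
The plan is to prove the equality $(M\ltimes G)^*=M^*\ltimes G$ as subsets of $M\times G$ by establishing both inclusions separately, using throughout the crucial fact that, because $G$ is a group, the $G$-component of any pair is automatically invertible.

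For the easy direction, suppose $(m,g)\in(M\ltimes G)^*$ has a two-sided inverse $(m',g')$. Reading off the first coordinate of $(m,g)*(m',g')=(1_M,1_G)$ and of $(m',g')*(m,g)=(1_M,1_G)$ immediately yields $mm'=m'm=1_M$, so $m\in M^*$. This gives $(M\ltimes G)^*\subseteq M^*\ltimes G$.

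For the reverse inclusion I take $(m,g)$ with $m\in M^*$ and propose the candidate inverse $(m^{-1},\,\Gamma_{m^{-1}}(g)^{-1})$, which makes sense because $G$ is a group. One side of the verification is painless: a direct application of the formula gives $(m,g)*(m^{-1},\Gamma_{m^{-1}}(g)^{-1})=(1_M,\,\Gamma_{m^{-1}}(g)\,\Gamma_{m^{-1}}(g)^{-1})=(1_M,1_G)$. The other side reduces, after applying the formula, to showing that $\Gamma_m(\Gamma_{m^{-1}}(g)^{-1})=g^{-1}$.

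The main obstacle is this last identity, but it dissolves upon combining two structural facts. First, each $\Gamma_m$ is a monoid endomorphism of the group $G$, hence commutes with inversion, so $\Gamma_m(\Gamma_{m^{-1}}(g)^{-1})=\Gamma_m(\Gamma_{m^{-1}}(g))^{-1}$. Second, the antihomomorphism property applied to $m^{-1}\cdot m=1_M$ gives $\Gamma_m\circ\Gamma_{m^{-1}}=\Gamma_{m^{-1}\cdot m}=\Gamma_{1_M}=\mathrm{id}_G$, where the last step uses that monoid morphisms preserve identities. Composing these yields $g^{-1}$ as desired, closing the computation. Since $M^*\ltimes G$ carries the operation inherited from $M\ltimes G$, the equality is one of submonoids, and in fact of groups.
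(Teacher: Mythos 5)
Your proof is correct and follows essentially the same route as the paper: the easy inclusion is read off from the first coordinate, and the inverse you exhibit, $(m^{-1},\Gamma_{m^{-1}}(g)^{-1})$, coincides with the paper's $(m^{-1},\Gamma_{m^{-1}}(g^{-1}))$ since $\Gamma_{m^{-1}}$ is a homomorphism into the group $G$ and so commutes with inversion. You merely carry out in full the two-sided verification (via $\Gamma_m\circ\Gamma_{m^{-1}}=\Gamma_{1_M}=\mathrm{id}_G$) that the paper leaves implicit.
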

\begin{proof}
The inclusion  $(M\ltimes G)^*\subseteq M^*\ltimes G$ is trivial, since if $(m,g)\in (M\ltimes G)^*$ then there exists $(m',g')$ such that
\[(m,g)*(m',g')=(e_1,e_2)\]
so $m m'=e_1$ identity element of $M$.
Let us now prove $(M\ltimes G)^*\supseteq M^*\ltimes G$. Let $(m,g)\in M^*\ltimes G$, then its inverse is $(m^{-1},\Gamma_{m^{-1}}(g^{-1}))$.
\end{proof}

We are now ready to prove the main proposition of this section as a corollary of  Theorem \ref{teo2}.

We first observe that the set of cyclic shifts of $[k]$ is clearly isomorphic to $C_k$,  the cyclic group of order $k$, and each cyclic shift can be identified by its action on $1$. 
\begin{cor}\label{main}
\[\mathcal{K}_{\psi}(U^k_n)\cong M_{[n]}\ltimes_\Gamma C_k^n\]
where $\Gamma$ si defined by
\[\Gamma(\sigma)(\gamma):=\Gamma_\sigma(\gamma):=\gamma_\sigma:=(\gamma_{\sigma(1)},\dots, \gamma_{\sigma(n)})\]
for any $\gamma\in C_k^n$.
\end{cor}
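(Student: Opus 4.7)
The plan is to lift Theorem \ref{teo2} directly to a monoid isomorphism by reading off both the underlying bijection and the multiplication. Identifying each cyclic shift of $[k]$ with the corresponding element of $C_k$ (as noted just before the statement), I would define $\Phi(\sigma,\gamma):=f^\gamma_\sigma$. Theorem \ref{teo2} gives surjectivity, and injectivity is read off by evaluating at $a_{i1}$: the pair $(\sigma(i),\gamma_i(1))$ can be recovered from $f^\gamma_\sigma(a_{i1})=a_{\sigma(i)\gamma_i(1)}$, and a cyclic shift of $[k]$ is determined by its value at $1$.

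The core step is to show that $\Phi$ intertwines composition of maps with the semidirect product operation. Applying the definition of $f^\gamma_\sigma$ twice gives
$$(f^\gamma_\sigma\circ f^{\gamma'}_{\sigma'})(a_{ij})=a_{\sigma(\sigma'(i))\ \gamma_{\sigma'(i)}(\gamma'_i(j))},$$
so this composition equals $f^{\gamma''}_{\sigma\sigma'}$ with $\gamma''_i=\gamma_{\sigma'(i)}\circ\gamma'_i$. Unpacking the semidirect product, on the other hand, gives $(\sigma,\gamma)\ast(\sigma',\gamma')=(\sigma\sigma',\,\Gamma_{\sigma'}(\gamma)\,\gamma')$, whose $C_k^n$-component has $i$-th coordinate $\gamma_{\sigma'(i)}\gamma'_i$. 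These match once composition of cyclic shifts is identified with multiplication in $C_k$, so $\Phi$ is a monoid homomorphism (sending the identity pair to the identity of $\mathcal{K}_\psi(U^k_n)$), and hence an isomorphism.

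It is also worth recording that $\Gamma$ really is an antihomomorphism into $\End(C_k^n)$, since
$$\Gamma_{\sigma_1\sigma_2}(\gamma)_i=\gamma_{\sigma_1\sigma_2(i)}=\Gamma_{\sigma_1}(\gamma)_{\sigma_2(i)}=(\Gamma_{\sigma_2}\Gamma_{\sigma_1}\gamma)_i,$$
without which the right-hand side of the corollary would not even define a monoid. The only delicate point, and the one small obstacle, is keeping track of which permutation ($\sigma$ or $\sigma'$) twists the tuple $\gamma$ in the product: the computation above forces it to be $\sigma'$, matching the definition of $\Gamma$.
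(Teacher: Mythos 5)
Your proposal is correct and follows essentially the same route as the paper: the same map $(\sigma,\gamma)\mapsto f^\gamma_\sigma$, bijectivity read off from Theorem \ref{teo2}, the same composition computation $(f^\gamma_\sigma\circ f^{\gamma'}_{\sigma'})(a_{ij})=a_{\sigma\sigma'(i)\ \gamma_{\sigma'(i)}(\gamma'_i(j))}$ matching the semidirect product law, and the same verification that $\Gamma$ is an antihomomorphism. Your explicit injectivity argument via evaluation at $a_{i1}$ is a small additional detail the paper leaves implicit, but the approach is identical.
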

\begin{proof}

The reader should first observe \[\Gamma_{\mu}(\gamma_{\sigma(1)},\dots, \gamma_{\sigma(n)})=(\gamma_{\sigma(\mu(1))},\dots,\gamma_{\sigma (\mu(i))},\dots,\gamma_{\sigma(\mu(n))})\]
for any $\sigma, \mu\in M_{[n]}$. This can be easily seen by denoting $\gamma_{\sigma(i)}=:g_i$.
Therefore, $\Gamma$ is an antihomomorphism, as we wanted:
\[\Gamma(\sigma \mu)(\gamma)=\gamma_{\sigma \mu}=(\gamma_{\sigma(\mu(1))},\dots,\gamma_{\sigma (\mu(i))},\dots,\gamma_{\sigma(\mu(n))})=\]
\[\Gamma_{\mu}(\gamma_{\sigma(1)},\dots, \gamma_{\sigma(n)})=\Gamma_{\mu}\circ \Gamma_{\sigma}(\gamma).\]
Let
\[\Delta: M_{[n]}\ltimes C_k^n \longrightarrow \mathcal{K}_{\psi}(U^k_n)\]
\[(\sigma, \gamma)\mapsto f^\gamma_\sigma.\]
$\Delta$ is clearly a bijection by Theorem \ref{teo2}. It is also an automorphism since
\[\Delta((\overline \sigma, \overline \gamma)*( \sigma,\gamma))(a_{i,j})=\Delta(\overline \sigma \sigma,\overline \gamma_\sigma \gamma)(a_{i,j})=f_{\overline \sigma \sigma}^{\overline \gamma_\sigma \gamma}(a_{i,j})=\]
\[a_{\overline \sigma \sigma(i),\overline \gamma_{\sigma(i)} \gamma_{i} (j)}=f_{\overline{\sigma}}^{\overline{\gamma}}(a_{\sigma(i), \gamma_i(j)})=f_{\overline{\sigma}}^{\overline{\gamma}}\circ f_{\sigma}^{\gamma}(a_{i,j})=\]
\[\bigl(\Delta(\overline \sigma, \overline \gamma)\circ\Delta( \sigma,\gamma)\bigr)(a_{i,j})\]
for all $i\in [n]$ and all $j\in [k]$.
\end{proof}

\section{Semigroup structure of $T_q^m$}\label{SS}
Consider now $T_q^m$ and notice that, since $M_{\vF_{q^m}}(\mathcal{P}_{\varphi_q})=\mathcal{L}_{\vF_{q^m}}$ and $\mathcal{K}_{\varphi_q}(\vF_{q^m})=\mathcal{C}_{\varphi_q}$, then we have 
\begin{equation}\label{eqprin}
T_q^m=\mathcal{L}_{\vF_{q^m}}\cap\mathcal{C}_{\varphi_q}=M_{\vF_{q^m}}(\mathcal{P}_{\varphi_q})\cap \mathcal{K}_{\varphi_q}(\vF_{q^m}).
\end{equation}
Indeed the condition 
\[f(S_k)\subseteq S_k\]
for each $S_k$ in the partition induced by $\varphi_q$ is equivalent to the subfield preserving requirement (\ref{property}), being 
\[S_1=\vF_q\quad \text{and}\quad S_k=\bigcap_{a|k,\,a\neq k}\left(\vF_{q^k}\setminus \vF_{q^a}\right)\quad \text{for $k\geq2$}.\]
Any element $\alpha$ in a cycle of length $d$ is associated to the irreducible polynomial $\stackrel[i=0]{d-1}{\prod}(x-\alpha^{q^i})\in\vF_{q}[x] $, so there is a bijection between the cycles of $\varphi_q$ of length $d$
and the monic irreducible polynomials of degree $d$
over $\vF_q$, whose cardinality is
\[\pi(d)={1\over d}\sum_{j\,| d}\mu(d/j) q^j\]
with $\mu$ being the Moebius function.
Now, write 
\[\varphi_q=\prod_{k\,|\,m} \phi_k\]
similarly as above with $\phi=\varphi_q$
 and label the elements of the finite field as follow:
$a_{i,j}^{(k)}$ is the $j$-th element living in the $i$-th $k$-cycle.
\begin{example}
Let $\vF_{2^2}=\vF_2[\alpha]/{(\alpha^2+ \alpha +1)}$ consisting of $\{0,1,\alpha,\alpha+1\}$. Indeed
\[\varphi_q= \phi_1 \phi_2= (0)(1)(\alpha,\alpha+1)\]
and then  $a_{1,1}^{(1)}=0$, $a_{2,1}^{(1)}=1$, $a_{1,1}^{(2)}=\alpha$ and $a_{1,2}^{(2)}=\alpha+1$.
\end{example}

\begin{teo}
\begin{equation}\label{maintheorem}
T_q^m\cong  \stackrel[k|m]{}{\bigtimes}M_{[\pi(k)]} \ltimes C_k^{\pi(k)}
 \end{equation}
\end{teo}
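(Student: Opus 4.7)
The plan is to combine equation (\ref{eqprin}) with Lemma \ref{lemma1} and Corollary \ref{main} in a short chain of isomorphisms, with only some bookkeeping about cycle lengths of $\varphi_q$ in between.

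First, by (\ref{eqprin}), $T_q^m = M_{\vF_{q^m}}(\mathcal{P}_{\varphi_q}) \cap \mathcal{K}_{\varphi_q}(\vF_{q^m})$, which is exactly the left-hand side of Lemma \ref{lemma1} with $\phi = \varphi_q$ and $S = \vF_{q^m}$. Applying the lemma directly gives
$$T_q^m \cong \stackrel[k\in W,\,(\varphi_q)_k\neq(\emptyset)]{}{\bigtimes}\mathcal{K}_{(\varphi_q)_k}(S_k).$$

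Second, I would identify which indices $k$ actually appear in this product. The cycles of $\varphi_q$ are the Frobenius orbits on $\vF_{q^m}$, and an element $\alpha\in\vF_{q^m}$ sits in a cycle of length $d$ precisely when its minimal polynomial over $\vF_q$ has degree $d$, equivalently when $\alpha\in\vF_{q^d}\setminus\bigcup_{a|d,\,a\neq d}\vF_{q^a}$. Such elements exist inside $\vF_{q^m}$ iff $d\mid m$, so $(\varphi_q)_k\neq(\emptyset)$ iff $k\mid m$. Moreover, the number of cycles of length $k$ equals the number of monic irreducible polynomials of degree $k$ over $\vF_q$ (via the correspondence $\alpha\mapsto\prod_{i=0}^{k-1}(x-\alpha^{q^i})$ recalled just before the theorem), namely $\pi(k)$.

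Third, for each $k\mid m$, the set $S_k$ has cardinality $k\,\pi(k)$ and decomposes into $\pi(k)$ disjoint $k$-cycles of $(\varphi_q)_k$, so $(S_k,(\varphi_q)_k)$ is, up to relabeling, exactly $(U^k_{\pi(k)},\psi)$ from Section \ref{CU}. Hence Corollary \ref{main} yields
$$\mathcal{K}_{(\varphi_q)_k}(S_k)\;\cong\; M_{[\pi(k)]}\ltimes_\Gamma C_k^{\pi(k)}.$$
Plugging this into the product from the first step produces (\ref{maintheorem}).

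The only real obstacle is the bookkeeping step in the middle: one must check carefully that the partition $\mathcal{P}_{\varphi_q}$ of $\vF_{q^m}$ built from the cycles of $\varphi_q$ agrees with the partition $\{S_1,S_k\}_{k\mid m,\,k\geq 2}$ described after (\ref{eqprin}), and that the cycle counts are exactly $\pi(k)$ for $k\mid m$ and zero otherwise. Once this is verified, the theorem follows by taking the product of the local isomorphisms produced by Corollary \ref{main} over all $k\mid m$; the semigroup structure is preserved factor by factor since Lemma \ref{lemma1} is itself a monoid isomorphism onto a direct product.
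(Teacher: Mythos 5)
Your proposal is correct and follows essentially the same route as the paper: equation (\ref{eqprin}) combined with Lemma \ref{lemma1} to split $T_q^m$ over the Frobenius-induced partition, then Corollary \ref{main} applied to each factor, with the identification of the cycle counts as $\pi(k)$ for $k\mid m$ (which the paper records in the discussion just before the theorem). The extra bookkeeping you spell out is exactly the implicit content of the paper's shorter argument.
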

\begin{proof}
It follows from Lemma \ref{lemma1} and 
Corollary \ref{main} using the partition induced by the Frobenius morphism.
Indeed,
using equation \ref{eqprin} and Lemma \ref{lemma1} we get
\[T_q^m\cong\ \stackrel[k\in W,\phi_k\neq(\emptyset)]{}{\bigtimes}\ \mathcal{K}_{\phi_k}(S_k).\]
Using now Corollary \ref{main} we get 
\[T_q^m\cong  \stackrel[k|m]{}{\bigtimes}M_{[\pi(k)]} \ltimes C_k^{\pi(k)}.\]
More explicitely, the action of $t\in \stackrel[k|m]{}{\bigtimes}M_{[\pi(k)]} \ltimes C_k^{\pi(k)}$ on an element
$a^{(k)}_{i,j}\in S_k\subseteq \vF_{q^m}$ is given by 
\[t(a^{(k)}_{i,j})=(\sigma^{(k)}, \gamma^{(k)})(a^{(k)}_{i,j})=f^{\gamma^{(k)}}_{\sigma^{(k)}}(a^{(k)}_{i,j})= a^{(k)}_{{\sigma^{(k)}(i)},{\gamma_i^{(k)}(j)}}\]
where $\gamma^{(k)}$ and $\sigma^{(k)}$ are the components indexed by $k$.

\end{proof} 
\begin{cor}\label{carli}
$$ (T_q^m)^*\cong\ \stackrel[k|m]{}{\bigtimes} \mathcal{S}_{\pi(k)}\ltimes C_k^{\pi(k)}  $$
where $\mathcal{S}_{\pi(k)}$ is the permutation group of $\pi(k)$ elements.
\end{cor}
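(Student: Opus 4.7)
The plan is to derive Corollary \ref{carli} directly from Theorem \ref{maintheorem} together with Lemma \ref{prodmonprop}, by taking the invertible part of each factor.

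First I would record the general fact that for any finite direct product of monoids $H_1 \times \cdots \times H_r$, the group of invertible elements satisfies $(H_1 \times \cdots \times H_r)^* = H_1^* \times \cdots \times H_r^*$, since invertibility is componentwise. Applied to the isomorphism of Theorem \ref{maintheorem}, this gives
\[
(T_q^m)^* \cong \stackrel[k|m]{}{\bigtimes} \bigl(M_{[\pi(k)]} \ltimes C_k^{\pi(k)}\bigr)^*.
\]

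Next I would handle each factor separately. Since $C_k^{\pi(k)}$ is a group, Lemma \ref{prodmonprop} applies with $M = M_{[\pi(k)]}$ and $G = C_k^{\pi(k)}$, yielding
\[
\bigl(M_{[\pi(k)]} \ltimes C_k^{\pi(k)}\bigr)^* \cong M_{[\pi(k)]}^* \ltimes C_k^{\pi(k)}.
\]
It then remains only to identify the invertible elements of the monoid $M_{[n]}$ of self-maps of $\{1,\dots,n\}$ under composition. A self-map of a finite set is invertible precisely when it is a bijection, hence $M_{[n]}^* = \mathcal{S}_n$. Substituting $n = \pi(k)$ and assembling the factors gives the claimed isomorphism.

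I do not foresee any real obstacle here: the argument is a straightforward assembly of already-established ingredients. The only point that requires a moment of care is verifying that the semidirect product structure on each factor is preserved when passing to invertibles, which is exactly the content of Lemma \ref{prodmonprop}; the action $\Gamma$ restricts to $\mathcal{S}_{\pi(k)}$ simply because permuting the indices of a tuple in $C_k^{\pi(k)}$ is a well-defined action of the symmetric group, so no new verification is needed.
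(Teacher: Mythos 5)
Your proposal is correct and follows essentially the same route as the paper: pass to invertibles componentwise across the direct product, apply Lemma \ref{prodmonprop} to each factor $M_{[\pi(k)]}\ltimes C_k^{\pi(k)}$, and identify $M_{[\pi(k)]}^*$ with $\mathcal{S}_{\pi(k)}$. You simply make explicit a few steps the paper labels as trivial.
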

\begin{proof}
Observe that  
$$ (T_q^m)^*\cong\ \stackrel[k|m]{}{\bigtimes} (M_{[\pi(k)]}\ltimes C_k^{\pi(k)} )^* $$
holds trivially.
Applying now Lemma \ref{prodmonprop} 
yields
$$ (T_q^m)^*\cong\ \stackrel[k|m]{}{\bigtimes} (M_{[\pi(k)]}\ltimes C_k^{\pi(k)} )^* \cong\ \stackrel[k|m]{}{\bigtimes} \mathcal{S}_{\pi(k)}\ltimes C_k^{\pi(k)}.$$
\end{proof}
\begin{cor}\label{croc}
\[|T_q^m|=\prod_{k\,| m} k^{\pi(k)}\pi(k)^{\pi(k)}\]
\[|(T_q^m)^*|=\prod_{k\,| m} k^{\pi(k)}\pi(k)!\]
\end{cor}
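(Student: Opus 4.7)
The plan is to read off both cardinalities directly from the structural isomorphisms already established in the theorem on $T_q^m$ and in Corollary \ref{carli}, since both give explicit cross products of semidirect products whose sizes are easy to tabulate.

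For the first formula, I start from $T_q^m\cong \bigtimes_{k\,|\,m} M_{[\pi(k)]} \ltimes C_k^{\pi(k)}$. Taking cardinalities of a cross product is multiplicative, so it suffices to compute $|M_{[\pi(k)]} \ltimes C_k^{\pi(k)}|$ for each divisor $k$ of $m$. By the definition of the semidirect product of monoids, its underlying set is the Cartesian product $M_{[\pi(k)]} \times C_k^{\pi(k)}$, so its size is $|M_{[\pi(k)]}|\cdot|C_k^{\pi(k)}|$. Now $M_{[\pi(k)]}$ is the monoid of all maps $[\pi(k)]\to[\pi(k)]$, which has $\pi(k)^{\pi(k)}$ elements, and $C_k^{\pi(k)}$ is a direct power of the cyclic group of order $k$, of size $k^{\pi(k)}$. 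Multiplying and then taking the product over $k\,|\,m$ gives the claimed expression $\prod_{k\,|\,m} k^{\pi(k)}\pi(k)^{\pi(k)}$.

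For the second formula I apply the same argument to $(T_q^m)^*\cong \bigtimes_{k\,|\,m} \mathcal{S}_{\pi(k)}\ltimes C_k^{\pi(k)}$ from Corollary \ref{carli}. The only change is that the monoid factor $M_{[\pi(k)]}$ is replaced by the symmetric group $\mathcal{S}_{\pi(k)}$, of size $\pi(k)!$, which yields $\prod_{k\,|\,m} k^{\pi(k)}\pi(k)!$.

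Since the whole argument is a direct counting consequence of the two isomorphisms and the fact that a semidirect product of monoids is a Cartesian product as a set, I do not anticipate a real obstacle. The only point worth emphasizing in the write-up is the reminder that the underlying set of $M\ltimes_\Gamma N$ is $M\times N$ (so the action $\Gamma$ plays no role in the counting), and that $|M_{[n]}|=n^n$, which might otherwise look opaque in the formula.
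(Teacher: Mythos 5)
Your proposal is correct and is precisely the counting argument the paper intends (the paper states Corollary \ref{croc} without proof, as an immediate consequence of Theorem \ref{maintheorem} and Corollary \ref{carli}): the underlying set of a semidirect product $M\ltimes_\Gamma N$ is $M\times N$, so the cardinalities multiply, and $|M_{[n]}|=n^n$, $|\mathcal{S}_n|=n!$, $|C_k^n|=k^n$ give the two formulas. No gaps.
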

\begin{rem}
Corollary \ref{carli} corresponds to \cite[Theorem 2]{carlitz} and Corollary \ref{croc} generalizes the corollary of \cite[Theorem 2]{carlitz}.
\end{rem}

\begin{rem}
Let us observe that a simpler construction as a direct product for $(T_q^m)^*$ can also be seen as follows:
\begin{itemize}
\item First notice that any permutation polynomial over $\vF_q$  can be extended to a permutation polynomial over $\vF_{q^m}$ with coefficients in $\vF_q$ by simply defining it as the identity function on $\vF_{q^m}\setminus \vF_q$ and Lagrange interpolating over the whole field. The produced  permutation polynomial over $\vF_{q^m}$ has coefficients in $\vF_q$,
since it commutes with $\varphi_q$, which is easily checked by looking at the base field and the rest separately.
\item 
{$ (T_q^m)^*$  has then a normal subgroup isomorphic to $\mathcal{S}_q$ consisting of
 \[\{s\in {T_q^m}^*\,|\, s  \ \text{is the identity on $\vF_{q^m}\setminus \vF_q$} \}.\]}
\item
Let \[H^m_q:= \{h \in (T_q^m)^*\, |\, h \ \text{is the identity on $\vF_q$}\}.\]
$H^m_q$ is also normal in $(T_q^m)^*$.
\item
$\mathcal{S}_q\times H^m_q  = (T_q^m)^*$. Indeed note first that $H^m_q\cap \mathcal{S}_q=1$. Now
given $f\in (T_q^m)^*$ we have to prove that it can be written as a composition of an element of
$H^m_q$ and an element of $\mathcal{S}_q$. Let $s_2\in \mathcal{S}_q$ such that $s_2$ restricted to $\vF_q$ is
$f$. Let $s_1(x)\in \mathcal{S}_q$ such that $s_1$ restricted to $\vF_q$ is the inverse permutation of the restriction
of $f$ to $\vF_q$. In other words $f(s_1(x))$ restricted to $\vF_q$ is the identity.
Observe then that, since $f(s_1(x))$ has also coefficients in $\vF_q$, it lives in $H^m_q$.
Verify that $s_2(f(s_1(x)))=f$. And so we have written $f$ as a composition of an element of
$\mathcal{S}_q$ and an element of $H^m_q$.
\end{itemize}
\end{rem}

\section{Asymptotic density of $T_q^m$}\label{AD}

Let us first  compute the asymptotic density of the group of permutation polynomials described in \cite{carlitz} inside the whole group
of permutation polynomials, and inside the monoid of the polynomial functions having coefficients in the subfield $\vF_{q}$.  We will restrict to the case $\vF_{q^p}$, $p$ prime.
\begin{teo}\label{ciao}
Consider an element of   $ \vF_{q}[x]/(x^{q^p}-x)$  chosen  uniformly  at random. The probability that this is a permutation polynomial tends to $0$ as 
$p$ and/or $q$ tends to $\infty$.
\end{teo}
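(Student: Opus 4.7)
The plan is to express the probability as an explicit ratio of two cardinalities already computed in the paper, and then squeeze it with Stirling's formula. For the denominator, I note that $\vF_q[x]/(x^{q^p}-x)$ is an $\vF_q$-vector space of dimension $q^p$, so it has $q^{q^p}$ elements. For the numerator, since the prime $p$ has only the divisors $1$ and $p$, with $\pi(1)=q$ and $\pi(p) = (q^p-q)/p$, Corollary \ref{croc} immediately gives
\[|(T_q^p)^*| = q!\; p^{\pi(p)}\; \pi(p)!.\]

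The probability of interest is therefore
\[P(p,q) \;=\; \frac{q!\, p^{\pi(p)}\, \pi(p)!}{q^{q^p}}.\]
I would feed the Stirling inequality $n!\leq C\sqrt{n}\,(n/e)^{n}$ into $\pi(p)!$, then exploit the identity $p\,\pi(p) = q^p - q$ to combine exponential factors:
\[p^{\pi(p)} \pi(p)^{\pi(p)} \;=\; (q^p-q)^{\pi(p)} \;\leq\; q^{p\pi(p)} \;=\; q^{q^p - q}.\]
After substitution, the factor $q^{q^p - q}$ cancels against $q^{q^p}$ up to $q^{-q}$, and the ratio collapses to
\[P(p,q) \;\leq\; C\sqrt{\pi(p)}\; e^{-\pi(p)}\;\frac{q!}{q^q}.\]

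The conclusion then follows because $q!/q^q \leq 1$ while $\sqrt{\pi(p)}\, e^{-\pi(p)} \to 0$ as $\pi(p) \to \infty$, and $\pi(p) = (q^p - q)/p$ tends to infinity whenever either $p$ or $q$ tends to infinity (the exponential $q^p$ dominating the linear factor $p$ in both regimes). The main subtle point I anticipate is that the sharper Stirling bound retaining the $e^{-\pi(p)}$ factor is essential: the crude estimate $\pi(p)! \leq \pi(p)^{\pi(p)}$ would perfectly cancel against $q^{q^p}$ via the identity above, leaving only $q!/q^q$, which remains bounded away from $0$ when $q$ is held fixed and thus fails to handle the $p\to\infty$ limit. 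With the exponentially decaying Stirling correction both regimes are controlled simultaneously by a single inequality, and no case analysis is needed.
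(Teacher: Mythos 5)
Your proposal is correct and follows essentially the same route as the paper: both express the probability as $q!\,p^{\pi(p)}\pi(p)!/q^{q^p}$ via Corollary \ref{croc}, apply Stirling to $\pi(p)!$, and use $p\,\pi(p)=q^p-q$ to cancel against $q^{q^p}$. Your version is in fact slightly tidier at the end, since the single upper bound $C\sqrt{\pi(p)}\,e^{-\pi(p)}\,q!/q^q$ handles both limiting regimes at once, whereas the paper computes the limit of $\bigl((q^p-q)/q^p\bigr)^{(q^p-q)/p}$ exactly and then finishes with a case analysis on $q$ fixed versus $q\to\infty$.
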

\begin{proof}
Given Corollary \ref{croc}, we need to consider 
$$
L\mathrel{\mathop:}=\stackrel[p\lor q \to \infty]{}{\lim}\frac{q! (p)^{\frac{q^p-q}{p}}(\frac{q^p-q}{p})!}{q^{q^p}}.
$$
By Stirling approximation this is
$$
L=\stackrel[p\lor q \to \infty]{}{\lim}\frac{q! (p)^{\frac{q^p-q}{p}}(\frac{q^p-q}{pe})^{\frac{q^p-q}{p}}\sqrt{2\pi\frac{q^p-q}{p}}}{q^{q^p}}.
$$
Now notice that
$$
\stackrel[p\lor q \to \infty]{}{\lim} \left(\frac{q^p-q}{q^p}\right)^{\frac{q^p-q}{p}}=\stackrel[p\lor q \to \infty]{}{\lim} \left(1-\frac{1}{q^{p-1}}\right)^{q^{p-1}\cdot\frac{q-q^{2-p}}{p}}
$$

By the continuity of the exponential function, this can be written as 
$$
\stackrel[p\lor q \to \infty]{}{\lim} e^{\frac{q-q^{2-p}}{p}\ln \left(1-\frac{1}{q^{p-1}}\right)^{q^{p-1}} }=e^{-\stackrel[p\lor q \to \infty]{}{\lim} \frac{q}{p}}
$$
so that
$$
L=\stackrel[p\lor q \to \infty]{}{\lim}\frac{q!(q^p)^{\frac{q^p-q}{p}}e^{- \frac{q}{p}}\sqrt{2\pi\frac{q^p-q}{p}}}{q^{q^p}e^{\frac{q^p-q}{p}}}.
$$
$$
=\stackrel[p\lor q \to \infty]{}{\lim}\frac{q!e^{- \frac{q}{p}}\sqrt{2\pi\frac{q^p-q}{p}}}{q^{q}e^{\frac{q^p-q}{p}}}=0,
$$
as one can easily see by exploring the cases $q\to \infty$ with Stirling and $q$ fixed.
\end{proof}
By observing that $q^p!>q^{q^p}$ definitively for large $p$ and/or $q$, we have also the following:
\begin{cor}
Consider a permutation of the set  $ \vF_{q^m}$  chosen  uniformly  at random. 
The probability that its associated permutation polynomial has coefficients in the subfield $\vF_q$ tends to $0$ as $p$ and/or $q$ tends to $\infty$.
\end{cor}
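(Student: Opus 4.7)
The idea is to deduce this corollary directly from Theorem \ref{ciao} by only replacing the sample space, keeping the set of favorable outcomes unchanged. A uniformly random permutation of $\vF_{q^p}$ has its (unique) polynomial representative of degree $<q^p$ with coefficients in $\vF_q$ precisely when it belongs to $(T_q^p)^*$. Hence the probability in question equals
\[
\frac{|(T_q^p)^*|}{q^p!},
\]
with $|(T_q^p)^*|$ given explicitly by Corollary \ref{croc}.

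The next step is the inequality announced before the statement, namely $q^p!>q^{q^p}$ for $p$ and/or $q$ sufficiently large. Granting this momentarily, we conclude
\[
\frac{|(T_q^p)^*|}{q^p!}\;\leq\;\frac{|(T_q^p)^*|}{q^{q^p}}\;\longrightarrow\;0,
\]
where the limit on the right is exactly the content of Theorem \ref{ciao}, since the ratio there is the number of canonical permutation polynomials over the total number of polynomial functions with coefficients in $\vF_q$.

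To justify $q^p!>q^{q^p}$, I would invoke Stirling's formula $n!\sim(n/e)^n\sqrt{2\pi n}$ with $n=q^p$, giving
\[
\frac{(q^p)!}{q^{q^p}}\;\sim\;\left(\frac{q^{p-1}}{e}\right)^{q^p}\sqrt{2\pi q^p},
\]
which tends to infinity as soon as $q^{p-1}>e$. The only pair with $p$ prime and $q$ a prime power that does not satisfy this bound is $(p,q)=(2,2)$, which is irrelevant for an asymptotic statement. There is no genuine obstacle here: all the conceptual work has been done in Theorem \ref{ciao}, and what remains is the routine Stirling estimate above to show that $q^p!$ eventually dominates $q^{q^p}$.
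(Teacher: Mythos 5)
Your proposal is correct and follows exactly the paper's own route: the paper derives this corollary in one line by observing that $q^p! > q^{q^p}$ definitively and comparing with Theorem \ref{ciao}. You merely add the (routine but welcome) Stirling justification of that inequality, which the paper leaves unstated.
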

We are now interested in an asymptotic estimate for the density of 
$T_q^p$ in $\vF_q[x]/(x^{q^p}-x)$ for $p$ prime number. We will show in fact that the monoid of canonical subfield preserving polynomials has nontrivial density inside the monoid of polynomial functions having coefficients in the subfield $\vF_q$.
Given Corollary \ref{croc}, the probability that an element of   $ \vF_{q}[x]/(x^{q^p}-x)$  chosen uniformly at random is subfield preserving is
\[{|T_q^p|\over q^{q^p}}=\frac{q^q (q^p-q)^{\frac{q^p-q}{p}}}{q^{q^p}}.\]
\begin{teo}\label{expo}
Consider an element  of   $ \vF_{q}[x]/(x^{q^p}-x)$  chosen  uniformly  at random. The probability that this is subfield preserving tends to $e^{- \stackrel[p\lor q \to \infty]{}{\lim}\frac{q}{p}}$ as $p$ and/or $q$ tends to $\infty$.
\end{teo}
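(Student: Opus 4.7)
The plan is to manipulate the explicit density formula
\[\frac{|T_q^p|}{q^{q^p}}=\frac{q^q (q^p-q)^{\frac{q^p-q}{p}}}{q^{q^p}}\]
into a form that is a classical $(1-1/n)^n$ expression raised to some simple exponent in $p$ and $q$. The first step is purely algebraic: since $q^{q^p}=q^q\cdot q^{q^p-q}$, the factor $q^q$ in the numerator cancels, and what remains can be rewritten as
\[\left(\frac{q^p-q}{q^p}\right)^{\frac{q^p-q}{p}}=\left(1-\frac{1}{q^{p-1}}\right)^{\frac{q^p-q}{p}}.\]
This already looks like the right shape.

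Next I would set $N:=q^{p-1}$ and rewrite the exponent as
\[\frac{q^p-q}{p}=N\cdot\frac{q-q^{2-p}}{p},\]
so the probability becomes
\[\left[\left(1-\tfrac{1}{N}\right)^{N}\right]^{\frac{q-q^{2-p}}{p}}.\]
Because $p$ is prime we have $p\geq 2$, hence $N=q^{p-1}\to \infty$ whenever $p$ or $q$ tends to infinity, so the inner bracket converges to $e^{-1}$ by the standard limit. It remains to pass to the limit in the exponent, using continuity of the exponential.

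The slightly delicate point is the joint limit ``$p\vee q\to\infty$'' combined with the term $q^{2-p}/p$. I would dispose of it by noting $0\leq q^{2-p}/p\leq q^{2-p}\leq 1$ for $p\geq 2$, and more strongly $q^{2-p}/p\to 0$ in every regime of interest (for $p=2$ it is $1/2$, but then $q/p-q^{2-p}/p=(q-1)/2\to\infty$ agrees with $\lim q/p=\infty$; for $p\to \infty$ it tends to $0$). Thus $(q-q^{2-p})/p$ shares the same (extended) limit as $q/p$, and the whole expression tends to $e^{-\lim q/p}$, as claimed.

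The main obstacle is really bookkeeping rather than a conceptual difficulty: one has to make sure the reduction to the $(1-1/N)^N$ form is uniform across all ways in which $p\vee q\to\infty$, and that the error term $q^{2-p}/p$ is negligible in each regime (fixed $q$ with $p\to\infty$, fixed $p$ with $q\to\infty$, and both tending to infinity simultaneously). Once that is done, the corollaries announced in the introduction -- density $0$ for fixed $p$ and $q\to\infty$, density $1$ for fixed $q$ and $p\to\infty$, density $1/e$ for $q=p\to\infty$ -- drop out by substituting $\lim q/p=\infty,0,1$ respectively.
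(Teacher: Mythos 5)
Your proposal is correct and follows essentially the same route as the paper: the authors also reduce the density to $\bigl(1-\tfrac{1}{q^{p-1}}\bigr)^{q^{p-1}\cdot\frac{q-q^{2-p}}{p}}$ and pass to the limit via continuity of the exponential (they do this computation inside the proof of Theorem \ref{ciao} and invoke it here by reference). Your explicit handling of the error term $q^{2-p}/p$ is a slightly more careful rendering of the same argument.
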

\begin{proof}
We need to consider 
$$
\ell\mathrel{\mathop:}=\stackrel[p\lor q \to \infty]{}{\lim}\frac{q^q (q^p-q)^{\frac{q^p-q}{p}}}{q^{q^p}}.
$$
With similar arguments as in Theorem \ref{ciao}, this transforms to

$$
\ell=\stackrel[p\lor q \to \infty]{}{\lim} \frac{q^q (q^p)^{\frac{q^p-q}{p}}}{q^{q^p}}e^{- \frac{q}{p}}=e^{-\stackrel[p\lor q \to \infty]{}{\lim} \frac{q}{p}}
$$

\end{proof}

\begin{cor}
$\,$
\begin{itemize}
\item $\stackrel[p\to \infty]{}\lim{\frac{|T_q^p|}{ q^{q^p}}}=1$, if $q$ is fixed.
\item $\stackrel[q\to \infty]{}\lim{\frac{|T_q^p|}{ q^{q^p}}}=0$, if $p$ is fixed.
\end{itemize}
\end{cor}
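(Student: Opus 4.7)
The plan is to deduce both bullet points directly from Theorem \ref{expo}, which asserts that
\[\frac{|T_q^p|}{q^{q^p}} \longrightarrow e^{-\lim\frac{q}{p}}\]
as $p$ or $q$ tends to infinity. Since each of the two statements concerns a one-parameter asymptotic regime (fix one of $p,q$, send the other to $\infty$), both regimes are instances of the $p \lor q \to \infty$ hypothesis in Theorem \ref{expo}, so the proof reduces to evaluating $\lim q/p$ in each case.

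For the first bullet, I would fix $q$ and observe that as $p \to \infty$ one has $q/p \to 0$, so the exponential factor $e^{-q/p}$ tends to $e^0 = 1$, giving the claimed limit. For the second bullet, I would fix $p$ and observe that as $q \to \infty$ one has $q/p \to +\infty$, so $e^{-q/p} \to 0$.

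There is no real obstacle here: the only mild subtlety is checking that Theorem \ref{expo} was proved in sufficient generality to accommodate both regimes (i.e.\ not only the diagonal $p = q \to \infty$), but inspection of the proof of Theorem \ref{expo} shows that the estimates indeed hold uniformly in each single-parameter limit. The statement is therefore a two-line corollary, whose purpose is to spell out the two extreme behaviours of the general formula $e^{-\lim q/p}$.
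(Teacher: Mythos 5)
Your proposal is correct and matches the paper's (implicit) argument exactly: the corollary is stated without proof as an immediate specialization of Theorem \ref{expo}, evaluating $e^{-\lim q/p}$ as $e^{0}=1$ when $q$ is fixed and as $e^{-\infty}=0$ when $p$ is fixed. Nothing further is needed.
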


\begin{cor}\label{interestcor}
Let $q=p$.
\[\lim_{p\to \infty}{\frac{|T_p^p|}{ p^{p^p}}}=1/e\]
\end{cor}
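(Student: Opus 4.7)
The plan is to derive Corollary \ref{interestcor} as an immediate specialization of Theorem \ref{expo}. Setting $q=p$ and letting $p\to\infty$ forces $q\to\infty$ simultaneously, while the ratio $q/p$ is identically $1$ along this diagonal; hence $\lim q/p=1$, and Theorem \ref{expo} directly yields
\[\lim_{p\to\infty}\frac{|T_p^p|}{p^{p^p}}=e^{-\,1}=\frac{1}{e}.\]
This is essentially a one-line deduction, so the proof can simply cite Theorem \ref{expo} and observe that the relevant limit of $q/p$ equals $1$.

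For a fully self-contained verification that avoids re-reading the proof of Theorem \ref{expo}, I would instead start from the closed form $|T_q^p|/q^{q^p}=q^q(q^p-q)^{(q^p-q)/p}/q^{q^p}$ recalled just before that theorem, specialize it to $q=p$, and set $M\mathrel{\mathop:}=(p^p-p)/p=p^{p-1}-1$. Using $p^p-p=p(p^{p-1}-1)$ together with the identity $p^{p^p}=p^{pM+p}$, the factors of $p^p$ and $p^{-p}$ cancel and the expression collapses to
\[\left(1-\frac{1}{p^{p-1}}\right)^{p^{p-1}-1}.\]
With $n\mathrel{\mathop:}=p^{p-1}\to\infty$, this is the classical sequence $(1-1/n)^{n-1}$, whose limit is $1/e$.

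Because the argument is essentially algebraic bookkeeping, there is no serious obstacle. The only point that demands attention is tracking the exponents carefully when the base and exponent both depend on $p$, so that the two large factors $p^p$ and $p^{-p}$ cancel exactly and leave behind a sequence manifestly of the form $(1-1/n)^{n-1}$. Everything else is just the standard limit $\lim_{n\to\infty}(1-1/n)^{n-1}=1/e$.
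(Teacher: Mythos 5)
Your proposal is correct and follows the paper's own (implicit) route: the corollary is intended as an immediate specialization of Theorem \ref{expo} with $q=p$, so that $\lim q/p=1$ and the limit is $e^{-1}$. Your supplementary direct computation reducing $|T_p^p|/p^{p^p}$ to $\left(1-\frac{1}{p^{p-1}}\right)^{p^{p-1}-1}$ is also algebraically correct and gives a clean self-contained check.
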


\begin{rem}
Clearly all the limits above are computed for $p$ and $q$ running over the natural numbers, but they hold in particular for the subsequences of increasing primes $p$ and possible orders of finite fields $q$. 
\end{rem}

\section{Example}\label{EE}
Let us consider the structure of $T_2^2$ as an example. Let $\alpha$ be a root of $x^2+x+1=0$, so that $\vF_{2^2}=\vF_{2}[\alpha]/(\alpha^2+\alpha+1)$. It is easy to check that for each polynomial $f\in L$ with $$L\mathrel{\mathop:}=\{0,1,x^2+x,x^2+x+1,x^3,x^3+1,x^3+x^2+x,x^3+x^2+x+1 \}$$ we have $f(\alpha)\in \vF_{2}$.
We know that $T_2^2$ contains $8$ polynomials, so that 
$$T_2^2=\frac{\vF_{2}[x]}{(x^4-x)}\Large\setminus L =$$
$$ \{x,x+1,x^2,x^2+1,x^3+x^2+1,x^3+x,x^3+x^2,x^3+x+1 \}.$$

The structure is $C_2 \bigtimes M_2$. 

Indeed $C_1^{2} \rtimes M_{2}=M_2$ and consists of  $$\{x,x^2+1,x^3+x^2,x^3+x+1 \}$$ that is those functions which fix $\vF_4\setminus\vF_2$ and act as $ M_{2}$ on $\vF_2$.

Also $C_2 \rtimes M_{1}=C_2$ and consists of  $$\{x,x^2 \}$$ that is those functions which fix $\vF_2$ and act as $ C_{2}$ on  $\vF_4\setminus\vF_2$. This is also $H_2^2$.

\section{Conclusions}

The set of canonical subfield preserving polynomials has been studied and a monoid structure has been provided via combinatorial arguments  (Section \ref{CU} and \ref{SS}) . The density of this set has been addressed yielding curious results at least for the case of prime degree extension  (Section \ref{AD}). A simple example has also been given (Section \ref{EE}).

\end{document}